\newtheorem{theorem}{Theorem}[section]
\newtheorem{lemma}[theorem]{Lemma}
\newtheorem{corollary}[theorem]{Corollary}
\theoremstyle{definition}
\newtheorem{definition}[theorem]{Definition}
\theoremstyle{remark}
\numberwithin{equation}{section}
\def\ds{\displaystyle}
\numberwithin{equation}{section}
\newcommand{\RR}{{\mathbb R}}
\newcommand{\NN}{{\mathbb N}}
\newcommand{\MS}{{\mathcal M}}
\newcommand{\EE}{{\mathbb E}}
\providecommand{\norm}[1]{\left\rVert#1\right\rVert}
\begin{document}

\setcounter{page}{1}

\title[Radon-Nikodym property for vector valued measures]{A characterization of the Radon-Nikodym property for vector valued measures}

\author[P. Mikusi\'nski \MakeLowercase{and} J.P. Ward]{Piotr Mikusi\'nski$^1$  \MakeLowercase{and}  John Paul Ward,$^{2}$}

\address{$^{1}$Department of Mathematics, University of Central Florida, Orlando, Florida.}
\email{\textcolor[rgb]{0.00,0.00,0.84}{Piotr.Mikusinski@ucf.edu}}

\address{$^{2}$Department of Mathematics, University of Central Florida, Orlando, Florida. 
Department of Mathematics, North Carolina A\&T State University, Greensboro, North Carolina.}
\email{\textcolor[rgb]{0.00,0.00,0.84}{jpward@ncat.edu}}

\subjclass[2010]{Primary 46G10; Secondary 28B05, 46B22.}

\keywords{Vector measures, Radon-Nikodym property,transfunctions.}


\begin{abstract}  If $\mu_1,\mu_2,\dots$ are positive measures on a measurable space $(X,\Sigma)$ and $v_1,v_2, \dots$ are elements of a Banach space $\EE$ such that $\sum_{n=1}^\infty \|v_n\| \mu_n(X) < \infty$, then $\omega (S)= \sum_{n=1}^\infty v_n \mu_n(S)$
defines a vector measure of bounded variation on $(X,\Sigma)$. We show $\EE$ has the Radon-Nikodym property if and only if every $\EE$-valued measure of bounded variation on $(X,\Sigma)$ is of this form.

As an application of this result we show that under natural conditions an operator defined on positive measures, has a unique extension to an operator defined on $\EE$-valued measures for any Banach space $\EE$ that has the Radon-Nikodym property.
\end{abstract}

\maketitle

\section{Introduction}

A Banach space $\EE$ has the {\it Radon-Nikodym property} with respect to a measure space $(X,\Sigma, \mu)$ if for every $\EE$-valued measure $\omega$ on $(X,\Sigma)$ of bounded variation that is absolutely continuous with respect $\mu$ there exists a Bochner integrable function $f$ on $(X,\Sigma, \mu)$ such that $\omega (S)= \int_S f d\mu$ for every $S\in\Sigma$. We say that $\EE$ has the {\it Radon-Nikodym property} if $\EE$ has Radon-Nikodym property with respect to every finite measure space. Not every Banach space has the Radon-Nikodym property (see \cite{DiestelUhl}).

If $\mu_1,\mu_2,\dots$ are positive measures on a measurable space $(X,\Sigma)$ and $v_1,v_2, \dots \in \EE$ are such that $\sum_{n=1}^\infty \|v_n\| \mu_n(X) < \infty$,
then $\omega (S)= \sum_{n=1}^\infty v_n \mu_n(S)$ defines a vector measure of bounded variation on $(X,\Sigma)$. We will show that every $\EE$-valued measure of bounded variation is of this form if and only if $\EE$ has the Radon-Nikodym property. 
 
As an example of application of the above result we will prove an extension theorem for transfunctions. By a transfunction (see \cite{trans} and \cite{JBPM1}) we mean a map between sets of measures on measurable spaces.  More precisely, if $(X,\Sigma_X)$ and $(Y, \Sigma_Y)$ are measurable spaces and  $\MS(X,\Sigma_X, \RR^+)$ and $\MS(Y,\Sigma_Y, \RR^+)$ are the sets of finite positive measures on $\Sigma_X$ and on $\Sigma_Y$, respectively, by a {\it transfunction} from $(X,\Sigma_X)$ to $(Y, \Sigma_Y)$ we mean a map $\Phi : \MS(X,\Sigma_X, \RR^+) \to \MS(Y,\Sigma_Y, \RR^+)$. If $f:(X,\Sigma_X)\to(Y, \Sigma_Y)$ is a measurable function, then
$\Phi_f(\mu)(B)= \mu(f^{-1}(B))$,  
for $\mu\in \MS(X,\Sigma_X, \RR^+)$ and $B\in \Sigma_Y$, defines a transfunction from $\MS(X,\Sigma_X, \RR^+)$ to $\MS(Y,\Sigma_Y, \RR^+)$. Properties of transfunctions related to functions are discussed in \cite{JBPM1}.
 
We are interested in transfunctions as a generalization of a function from $X$ to $Y$.  Instead of mapping a point $x\in X$ to a point $y\in Y$ a transfunction can be thought of as mapping a probability distribution of the input to a probability distribution of the output. Another interpretation of a transfunction could be a change in a population distributed in $X$. The total population can increase or decrease and its distribution in $X$ can change. A transfunction captures all these changes.

The definition of transfunctions makes sense if finite positive measures are replaced by vector valued measures of bounded variation. In the last section of this note we define extensions of transfunctions to vector measures. We also discuss the question of uniqueness of such extensions.

We have two reasons to consider extensions of transfunctions to signed measures and vector measures. First, for some applications it is more natural or even necessary to use or vector measures. Second, by extending the domain of a transfunction to a vector space we are able to use tools from functional analysis.

\section{Measures with values in a Banach space with the Radon-Nikodym property}

In this note we use the same symbol to denote a subset of $X$ and the characteristic function of that set, that is, if $A\subset X$ we will write
$$
A(x) = \left\{ \begin{array}{ll}
1 & \mbox{if } x\in A\\
0 & \mbox{otherwise}\\
\end{array} \right. .
$$
By $\mu_A$ we denote the restriction of $\mu$ to $A$, that is, $\mu_A(S)=\mu(S\cap A)$.

Let $X$ be a nonempty set, $\Sigma$ a $\sigma$-algebra of subsets of $X$, and let $(\EE,\|\cdot\|)$ be a Banach space. By the {\it variation} of a set function $\mu : \Sigma \to \EE$ we mean the set function $|\mu| : \Sigma \to [0,\infty]$ defined by
$$
|\mu|(A)= \sup \left\{ \sum_{B\in \pi} ||\mu (B)|| : \pi\subset \Sigma \text{ is a finite partition of } A \right\}.
$$
Note that $|\mu|(A) \geq \|\mu(A)\|$ for any set $A \in \Sigma$. If $|\mu|(X)<\infty$, then we say that $\mu$ is of {\it bounded variation}. A $\sigma$-additive set function of bounded variation will be called an $\EE$-valued measure or simply a vector measure.

Let $(X,\Sigma, \mu)$ be a measure space and let $\EE$ be a Banach space. We use the following definition of Bochner integrable functions (see \cite{TheBochnerIntegral} or \cite{integrals}):
\begin{definition}
A function $f:X\to \EE$ is called Bochner integrable  if there are sets $A_1, A_2,\dots \in \Sigma$ and vectors $v_1, v_2, \dots \in \EE$ such that 
$\sum_{n=1}^\infty \mu(A_n)<\infty$ and
$f(x)=\sum_{n=1}^\infty v_n A_n(x)$ for every $x\in X$ for which $\sum_{n=1}^\infty \|v_n\|A_n(x) < \infty$.

If for some $f:X\to \EE$, $A_1, A_2, \dots \in \Sigma$, and $v_1, v_2, \dots \in \EE$, both conditions are satisfied, we write $f \simeq \sum_{n=1}^\infty v_n A_n$. If
$f \simeq \sum_{n=1}^\infty v_n A_n$, then we define
$\int f d\mu = \sum_{n=1}^\infty v_n \mu(A_n)$.
\end{definition}

If $f:X\to \EE$ is a Bochner integrable function on a measure space $(X,\Sigma, \mu)$, then
$\omega (S)= \int_S f d\mu$
defines a vector measure of bounded variation on $(X,\Sigma)$. Not every vector measure is of this form (see \cite{DiestelUhl}).

\begin{theorem}\label{Th4.1}
A Banach space $\EE$ has the Radon-Nikodym property with respect to a measure space $(X,\Sigma, \mu)$ if and only if every $\EE$-valued measure $\omega$ on $(X,\Sigma)$ of bounded variation that is absolutely continuous with respect $\mu$ is of the form
\[
\omega (S)= \sum_{n=1}^\infty v_n \mu_n(S)
\]
where $\mu_1,\mu_2, \dots$ are positive measures on $(X,\Sigma)$ that are absolutely continuous with respect $\mu$ and $v_1,v_2, \dots \in \EE$ are such that $\sum_{n=1}^\infty \|v_n\| \mu_n(X) < \infty$.
\end{theorem}

\begin{proof} Let $\omega$ be a $\EE$-valued measure of bounded variation on $(X,\Sigma)$ that is absolutely continuous with respect $\mu$. If $\EE$ has the Radon-Nikodym property, there exists a Bochner integrable function $f$ on $(X,\Sigma, \mu)$ such that $\omega (S)= \int_S f d\mu$ for every $S\in\Sigma$. Let $A_1, A_2, \dots \in \Sigma$ and $v_1, v_2, \dots \in \EE$ be such that $f \simeq \sum_{n=1}^\infty v_n A_n$ in $(X,\Sigma, \mu)$. Then
\[
\int_S f d\mu=\sum_{n=1}^\infty v_n \mu(S\cap A_n).
\]
If we define 
$\mu_n (S)=\mu(S\cap A_n)$,
then
\[
\omega (S)= \int_S f d\mu= \sum_{n=1}^\infty v_n \mu_n(S).
\]

Now let $\omega$ be an $\EE$-valued measure on $(X,\Sigma)$ of bounded variation that is absolutely continuous with respect $\mu$. If
\[
\omega = \sum_{n=1}^\infty v_n \mu_n
\]
where $\mu_1,\mu_2, \dots$ are positive measures on $(X,\Sigma)$ that is absolutely continuous with respect $\mu$ and $v_1,v_2, \dots \in \EE$ are such that $\sum_{n=1}^\infty \|v_n\| \mu_n(X) < \infty$, we define $f_n=\frac{d\mu_n}{d\mu}$ for $n\in\NN$. Since
\[
\sum_{n=1}^\infty \|v_n f_n\|_1=\sum_{n=1}^\infty \|v_n\| \|f_n\|_1 = \sum_{n=1}^\infty \|v_n\| \mu_n(X) < \infty,
\]
where $\|\cdot\|_1$ denotes the $L^1$-norm with respect to $\mu$, the series $\sum_{n=1}^\infty v_n f_n$ converges to a Bochner integrable function $f$ on $(X,\Sigma, \mu)$ and we have
\[
\omega (S)= \sum_{n=1}^\infty v_n \mu_n(S)=\sum_{n=1}^\infty v_n \int_S f_n d\mu=\int_S \sum_{n=1}^\infty v_nf_n d\mu=\int_S f d\mu,
\]
for every $S\in\Sigma$.
\end{proof}

\begin{corollary}
 A Banach space $\EE$ has the Radon-Nikodym property if and only if every $\EE$-valued measure $\omega$ of bounded variation on any measurable space $(X,\Sigma)$ is of the form $\omega = \sum_{n=1}^\infty v_n \mu_n$, 
where $\mu_1,\mu_2, \dots$ are positive measures on $(X,\Sigma)$ that is absolutely continuous with respect to $|\omega|$ and $v_1,v_2, \dots \in \EE$ are such that $\sum_{n=1}^\infty \|v_n\| \mu_n(X) < \infty$.
\end{corollary}

\section{Extension of transfunctions to vector valued measures}

In this section we consider extensions of transfunctions to vector measures. We start with three technical lemmas that will be used in the proof of the main result.

\begin{lemma}\label{Lem1}
Let $\mu_1, \dots , \mu_n$ be finite positive measures on $(X,\Sigma)$. Then there exists a measure $\mu$ on $(X,\Sigma)$ such that for every $\varepsilon>0$ there is a finite partition 
$\pi \subset \Sigma$ of $X$ such that for $i=1,\dots,n$ we have
\[
\mu_i = \sum_{S \in \pi} \alpha_{i,S} \mu_{S} + \kappa_i
\]
where $\alpha_{i,S}\geq 0$ and $\kappa_1, \dots , \kappa_n$ are positive measures on $(X,\Sigma)$ such that
\begin{equation}
\label{eq:kappa}
\kappa_1(X)  + \dots +  \kappa_n(X)  < \varepsilon.
\end{equation}
In particular, we may define 
$\mu$ to be $\sum_{i=1}^n \mu_i$.
\end{lemma}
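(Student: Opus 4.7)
The plan is to choose $\mu = \sum_{i=1}^n \mu_i$, as suggested, and to build the partition from joint level sets of the Radon--Nikodym derivatives $f_i = d\mu_i/d\mu$. Since $\mu_i \le \mu$ and $\mu$ is finite, Radon--Nikodym produces measurable $f_i$ with $0 \le f_i \le 1$ holding $\mu$-almost everywhere; modifying each $f_i$ on a common $\mu$-null set, I may assume $0 \le f_i \le 1$ pointwise on $X$.

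Fix $\varepsilon > 0$ and choose an integer $N > n\mu(X)/\varepsilon$ (if $\mu(X) = 0$ the lemma is trivial). Partition $[0,1]$ into the intervals $I_k = [k/N,(k+1)/N)$ for $0 \le k \le N-2$ together with $I_{N-1} = [(N-1)/N, 1]$. For each tuple $(k_1,\dots,k_n) \in \{0,\dots,N-1\}^n$ set
\[
S_{k_1,\dots,k_n} = \bigcap_{i=1}^n f_i^{-1}(I_{k_i}),
\]
and let $\pi$ consist of the nonempty sets among these $N^n$ measurable blocks; then $\pi$ is a finite measurable partition of $X$. On the block $S = S_{k_1,\dots,k_n}$ I put $\alpha_{i,S} = k_i/N$, so that $\alpha_{i,S} \le f_i(x) \le \alpha_{i,S} + 1/N$ for every $x \in S$.

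Define $\kappa_i = \mu_i - \sum_{S \in \pi} \alpha_{i,S}\mu_S$. Since $\mu_S(A) = \mu(A \cap S)$, for any $A \in \Sigma$,
\[
\kappa_i(A) = \sum_{S \in \pi} \int_{A \cap S} (f_i - \alpha_{i,S}) \, d\mu \ge 0,
\]
which shows $\kappa_i$ is a nonnegative $\sigma$-additive set function, hence a (positive) measure. Taking $A = X$ gives $\kappa_i(X) \le \mu(X)/N$, and therefore $\kappa_1(X) + \dots + \kappa_n(X) \le n\mu(X)/N < \varepsilon$, as required.

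The only delicate point is arranging that each $\kappa_i$ be an honest positive measure rather than a signed one; this forces us to approximate each $f_i$ \emph{from below} on every block, which is why $\alpha_{i,S}$ is taken to be the left endpoint of the subinterval into which $f_i$ falls. Everything else is just a joint quantization of the $n$ Radon--Nikodym densities into $N$ levels, and the estimate on $\sum_i \kappa_i(X)$ is an immediate consequence of the uniform bound $f_i - \alpha_{i,S} \le 1/N$.
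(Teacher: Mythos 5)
Your proof is correct and follows essentially the same route as the paper's: take $\mu=\sum_{i=1}^n\mu_i$, approximate the Radon--Nikodym derivatives $f_i=d\mu_i/d\mu$ from below by nonnegative simple functions subordinate to a single finite partition, and let $\kappa_i$ absorb the (small, nonnegative) remainder. The only difference is cosmetic: you build the simple functions explicitly by uniform quantization of the level sets of the $f_i$ (exploiting $0\le f_i\le 1$), whereas the paper invokes the generic approximation of integrable nonnegative functions by simple functions and then passes to the common refinement of the resulting partitions.
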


\begin{proof} Let $\varepsilon>0$. 
Notice that the the measures 
$\mu_1,\dots, \mu_n$
are absolutely continuous with respect to 
$\mu$ as defined above.
Consider the Radon-Nikodym derivatives $f_i$ of  $\mu_i$, that is,
$\mu_i(B)=\int_B f_i d \mu$
for all $B\in\Sigma$. Since each $f_i$ is a non-negative and integrable with respect to $\mu$, 
there are simple functions 
$
	\sum_{A\in \pi_i}
	\alpha_{i,A}
	A(x),
$
with respect to finite partitions $\pi_i$ of $X$, such that $\alpha_{i,A} \geq 0$, $\sum_{A\in \pi_i} \alpha_{i,A} A(x) \leq f_i$, and 
\begin{equation*}
	\int_X
	\left(
	f_i
	-
	\sum_{A\in \pi_i}
	\alpha_{i,A}A(x)
	\right)
	d\mu
	<
	\frac{\varepsilon}{n}.
\end{equation*} 
Now define the common refinement of the partitions $\pi_i$ to be $\pi$, and define the measures $\kappa_i$ by the equation
\begin{equation*}
	\kappa_i (B)
	=
	\int_B
	\left(
	f_i-
	\sum_{A\in \pi_i}
	\alpha_{i,A}
	A(x)
	\right)
	d\mu
\end{equation*} 
for all $B\in \Sigma$.
Notice that each simple function with respect to $\pi_i$ is also a simple function with respect to $\pi$, that is,
\begin{equation*}
	\sum_{A\in \pi_i}
	\alpha_{i,A}
	A(x)
	=
	\sum_{S\in \pi}
	\alpha_{i,S}
	S(x)
\end{equation*} 
where $\alpha_{i,A} = \alpha_{i,S}$ if 
$S\subseteq A$. Consequently, for every $B\in \Sigma$, we have
\begin{align*}
	\mu_i(B)
	&=
	\int_B
	\sum_{S\in \pi}
	\alpha_{i,S}
	S(x)
	d\mu
	+
	\int_B
	\left(
	f_i-
	\sum_{S\in \pi}
	\alpha_{i,S}
	S(x)
	\right)
	d\mu
	\\
	&=
	\sum_{S\in \pi} 
	\alpha_{i,S}
	\mu_S(B)
	+
	\kappa_{i}(B),
\end{align*}
and the $\kappa_i$'s were constructed to satisfy \eqref{eq:kappa}.
\end{proof}

Let $\Phi: \MS(X,\Sigma_X, \RR) \to \MS(Y,\Sigma_Y, \RR)$ be a transfunction.  We say that $\Phi$ is bounded  if $\|\Phi(\mu)\|\leq C\|\mu\|$ for some $C>0$ and all $\mu\in \mathcal{M}$ and we define \[
\|\Phi \|= \inf \{C: \|\Phi(\mu)\|\leq C\|\mu\| \text{ for all } \mu \in \MS(X,\Sigma_X, \RR)   \},
\]
so that we have $\|\Phi(\mu)\|\leq \|\Phi\|\|\mu\|$ for all $\mu\in \MS(X,\Sigma_X, \RR)$. We say that $\Phi$ is strongly additive if $\Phi(\mu_1+\mu_2)= \Phi(\mu_1)+\Phi(\mu_2)$ for all $\mu_1,\mu_2\in\mathcal{M}$, and we say that $\Phi$ is homogeneous if $\Phi(\alpha \mu)=\alpha \Phi(\mu)$ for any $\alpha > 0$.

\begin{lemma}\label{bound} Let $\Phi: \MS(X,\Sigma_X, \RR^+) \to \MS(Y,\Sigma_Y, \RR^+)$ be a bounded, strongly additive, and homogeneous transfunction and let $\EE$ be a Banach space. Then 
 \[
 \|v_1\Phi\mu_1+ \dots + v_n\Phi\mu_n\| \leq \|\Phi\| \|v_1\mu_1+ \dots + v_n\mu_n\|
 \]
 for all $v_1, \dots , v_n\in \EE$, $\mu_1, \dots , \mu_n\in \MS(X,\Sigma_X, \RR^+)$, and $n\in\NN$.
\end{lemma}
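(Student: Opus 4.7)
The plan is to apply Lemma~\ref{Lem1} to reduce the estimate to a combinatorial computation on a common finite partition of $X$, and then let the error produced by that lemma shrink to zero.

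Fix $\varepsilon>0$ and apply Lemma~\ref{Lem1} with $\mu:=\sum_{i=1}^n\mu_i$ to obtain a finite partition $\pi$ of $X$, nonnegative coefficients $\alpha_{i,S}$, and positive measures $\kappa_1,\dots,\kappa_n$ with $\sum_i\kappa_i(X)<\varepsilon$ satisfying
\[
 \mu_i=\sum_{S\in\pi}\alpha_{i,S}\mu_S+\kappa_i.
\]
Because $\Phi$ is strongly additive and homogeneous, and $\pi$ is finite, the same decomposition passes through $\Phi$:
\[
 \Phi\mu_i=\sum_{S\in\pi}\alpha_{i,S}\Phi(\mu_S)+\Phi(\kappa_i).
\]
Setting $w_S:=\sum_{i=1}^n v_i\alpha_{i,S}\in\EE$, one then has
\[
 \sum_{i=1}^n v_i\Phi\mu_i=\sum_{S\in\pi}w_S\Phi(\mu_S)+\sum_{i=1}^n v_i\Phi(\kappa_i),
\]
and similarly $\sum_i v_i\mu_i=\sum_{S\in\pi} w_S\mu_S+\sum_i v_i\kappa_i$.

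The main term is bounded using the triangle inequality for the variation of an $\EE$-valued measure together with $\|\Phi(\mu_S)\|\leq\|\Phi\|\,\mu(S)$:
\[
 \Bigl\|\sum_{S\in\pi}w_S\Phi(\mu_S)\Bigr\|\leq\sum_{S\in\pi}\|w_S\|\,\|\Phi(\mu_S)\|\leq\|\Phi\|\sum_{S\in\pi}\|w_S\|\mu(S).
\]
The step I expect to matter most, and the only one that is not bookkeeping, is the identity
\[
 \Bigl\|\sum_{S\in\pi}w_S\mu_S\Bigr\|=\sum_{S\in\pi}\|w_S\|\mu(S),
\]
which is exactly what causes the norm on the right-hand side of the lemma to appear. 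It holds because the restrictions $\mu_S$ have pairwise disjoint supports (contained in the blocks of $\pi$): testing the variation against $\pi$ itself yields the lower bound, while for any other finite partition its common refinement with $\pi$ gives the matching upper bound block by block via the triangle inequality.

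With this identity in hand, the previous display becomes
\[
 \Bigl\|\sum_{S\in\pi}w_S\Phi(\mu_S)\Bigr\|\leq\|\Phi\|\,\Bigl\|\sum_i v_i\mu_i-\sum_i v_i\kappa_i\Bigr\|\leq\|\Phi\|\,\Bigl\|\sum_i v_i\mu_i\Bigr\|+\|\Phi\|M\varepsilon,
\]
where $M:=\max_i\|v_i\|$. The residual contribution satisfies $\|\sum_i v_i\Phi(\kappa_i)\|\leq\|\Phi\|\sum_i\|v_i\|\kappa_i(X)\leq\|\Phi\|M\varepsilon$ by the same reasoning applied to the $\kappa_i$. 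Combining these two estimates yields
\[
 \Bigl\|\sum_{i=1}^n v_i\Phi\mu_i\Bigr\|\leq\|\Phi\|\,\Bigl\|\sum_{i=1}^n v_i\mu_i\Bigr\|+2\|\Phi\|M\varepsilon,
\]
and letting $\varepsilon\to 0$ finishes the argument.
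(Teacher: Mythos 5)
Your proof is correct and follows essentially the same route as the paper's: both apply Lemma~\ref{Lem1} to reduce to a common finite partition, push the decomposition through $\Phi$ by strong additivity and homogeneity, exploit the identity $\bigl\|\sum_{S\in\pi} w_S\mu_S\bigr\|=\sum_{S\in\pi}\|w_S\|\mu(S)$ for measures with disjoint supports, and let $\varepsilon\to0$. The only difference is cosmetic: you derive the smallness of the two $\kappa$-error terms explicitly from $\sum_i\kappa_i(X)<\varepsilon$, the bound $M=\max_i\|v_i\|$, and the boundedness of $\Phi$, whereas the paper simply asserts the $\kappa_i$ can be chosen so that both error norms are below $\varepsilon/2$.
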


\begin{proof} Let $v_1, \dots , v_n\in \EE$, $\mu_1, \dots , \mu_n\in \MS(X,\Sigma_X, \RR^+)$, and let $\mu=\sum_{k=1}^n \mu_k$. By Lemma \ref{Lem1}, for any $\varepsilon  >0$ there are disjoint sets $S_1, \dots , S_N \in \Sigma$ such that for $i=1,\dots,n$ we have
\[
\mu_i = \sum_{j=1}^N \alpha_{i,j} \mu_{S_j} + \kappa_i,
\]
where $\alpha_{k,j}\geq 0$ and $\kappa_1, \dots , \kappa_n$ are measures such that
\[
\norm{v_1\kappa_1 + \dots + v_n\kappa_n} < \frac\varepsilon{2\|\Phi\|}
\quad \text{and} \quad
\norm{v_1\Phi(\kappa_1) + \dots + v_n\Phi(\kappa_n)} < \frac\varepsilon 2.
\]
Then
\begin{align*}
	\norm{v_1 \Phi(\mu_1) + \dots +v_n \Phi(\mu_n)}
	&=\norm{\sum_{i=1}^n v_i \Phi \left(\sum_{j=1}^N \alpha_{i,j} \mu_{S_j}+ \kappa_i\right)}\\
	&=\norm{\sum_{i=1}^n v_i \left(\sum_{j=1}^N \alpha_{i,j} \Phi (\mu_{S_j})+ \Phi (\kappa_i)\right)}\\
	&=\norm{\sum_{j=1}^N \left( \sum_{i=1}^n \alpha_{i,j}v_i\right) \Phi (\mu_{S_j})+ \sum_{i=1}^n v_i\Phi (\kappa_i)}\\
	&\leq \norm{\sum_{j=1}^N \left( \sum_{i=1}^n \alpha_{i,j}v_i\right) \Phi (\mu_{S_j})}+ \norm{\sum_{i=1}^n v_i\Phi (\kappa_i)}\\
	&\leq \norm{\sum_{j=1}^N \left( \sum_{i=1}^n \alpha_{i,j}v_i\right) \Phi (\mu_{S_j})}+ \frac\varepsilon 2\\
	&\leq \sum_{j=1}^N \norm{\left( \sum_{i=1}^n \alpha_{i,j}v_i\right) \Phi (\mu_{S_j})}+ \frac\varepsilon 2\\
	&\leq \sum_{j=1}^N \norm{ \sum_{i=1}^n \alpha_{i,j}v_i}\norm{ \Phi (\mu_{S_j})}+ \frac\varepsilon 2\\
	&\leq \|\Phi\| \sum_{j=1}^N \norm{ \sum_{i=1}^n \alpha_{i,j}v_i}\norm{ \mu_{S_j}}+ \frac\varepsilon 2.
\end{align*}
Since
\begin{align*}
\norm{v_1\mu_1+ \dots + v_n\mu_n}  &=  \norm{\sum_{i=1}^n v_i\sum_{j=1}^N \alpha_{i,j} \mu_{S_j}+ \sum_{i=1}^n v_i\kappa_i}\\
&\geq \norm{\sum_{i=1}^n v_i\sum_{j=1}^N \alpha_{i,j} \mu_{S_j}}-\frac\varepsilon{2\|\Phi\|}\\
&= \norm{\sum_{j=1}^N \sum_{i=1}^n \alpha_{i,j}v_i \mu_{S_j}}-\frac\varepsilon{2\|\Phi\|}\\ 
&= \sum_{j=1}^N\norm{\sum_{i=1}^n \alpha_{i,j}v_i \mu_{S_j}}-\frac\varepsilon{2\|\Phi\|}\\
&= \sum_{j=1}^N\norm{\sum_{i=1}^n \alpha_{i,j}v_i} \norm{\mu_{S_j}}-\frac\varepsilon{2\|\Phi\|},
\end{align*}
we get
\[
\norm{v_1 \Phi(\mu_1) + \dots +v_n \Phi(\mu_n)} \leq \|\Phi\| \norm{v_1\mu_1+ \dots + v_n\mu_n} +\varepsilon .
\]
Since $\varepsilon $ is an arbitrary positive number, the desired inequality follows.
\end{proof}

\begin{corollary}\label{C5.3}
  Let $\Phi: \MS(X,\Sigma_X, \RR^+) \to \MS(Y,\Sigma_Y, \RR^+)$ be a bounded, strongly additive, and homogeneous transfunction and let $\EE$ be a Banach space. If
  \[
  \sum_{n=1}^\infty v_n\mu_n=0,
  \] 
  for some $v_n\in\EE$ and $\mu_n \in \MS(X,\Sigma_X, \RR^+)$, then
  \[
  \sum_{n=1}^\infty v_n\Phi\mu_n=0.
  \] 
\end{corollary}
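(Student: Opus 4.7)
My plan is to apply Lemma \ref{bound} to the partial sums of the two series and then pass to the limit. Set
\[
S_N = \sum_{n=1}^N v_n \mu_n, \qquad T_N = \sum_{n=1}^N v_n \Phi(\mu_n),
\]
so that Lemma \ref{bound} yields $\|T_N\| \leq \|\Phi\|\,\|S_N\|$ for every $N\in\NN$. The strategy is then to show $\|S_N\|\to 0$ and read off $\|T_N\|\to 0$, whence the limit $\sum v_n \Phi(\mu_n)$ (which I also need to justify exists) must vanish.

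Next I would argue $\|S_N\|\to 0$. The hypothesis $\sum_{n=1}^\infty v_n\mu_n=0$, read in the sense of Section 4, implicitly carries the summability condition $\sum_{n=1}^\infty \|v_n\|\mu_n(X)<\infty$, since otherwise the expression on the left does not define a vector measure in the framework of Theorem \ref{Th4.1}. The tail $R_N=\sum_{n=N+1}^\infty v_n\mu_n$ therefore satisfies
\[
\|R_N\|\leq \sum_{n=N+1}^\infty \|v_n\|\mu_n(X)\longrightarrow 0,
\]
and since $S_N+R_N=0$ we obtain $\|S_N\|=\|R_N\|\to 0$. Combined with the Lemma \ref{bound} estimate this gives $\|T_N\|\to 0$.

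It remains to note that the same summability delivers
\[
\sum_{n=1}^\infty \|v_n\|\,\|\Phi(\mu_n)\| \leq \|\Phi\|\sum_{n=1}^\infty \|v_n\|\,\mu_n(X) < \infty,
\]
so $\sum v_n\Phi(\mu_n)$ converges in total variation to some vector measure $T$, and necessarily $T=\lim_N T_N = 0$. The whole argument is essentially bookkeeping on top of Lemma \ref{bound}; the only genuinely delicate point I anticipate is pinning down the precise sense in which the series in the hypothesis is required to converge, since once $\sum\|v_n\|\mu_n(X)<\infty$ is granted the rest is a one-line estimate followed by a passage to the limit.
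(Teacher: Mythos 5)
Your proposal is correct and follows essentially the same route as the paper: the paper's proof consists precisely of applying Lemma \ref{bound} to the partial sums and letting $n\to\infty$ in the inequality $\bigl\|\sum_{k=1}^n v_k\Phi\mu_k\bigr\|\leq\|\Phi\|\bigl\|\sum_{k=1}^n v_k\mu_k\bigr\|$. You simply make explicit the convergence bookkeeping (the summability condition, the vanishing of the tails, and the existence of $\lim_N T_N$) that the paper leaves implicit.
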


\begin{proof} Since
 \[
\left\|\sum_{j=1}^n v_n\Phi \mu_n\right\| \leq \|\Phi\|\left\|\sum_{j=1}^n v_n \mu_n\right\|,
\]
we have
\[
\left\|\sum_{j=1}^\infty v_n\Phi \mu_n\right\| \leq \|\Phi\|\left\|\sum_{j=1}^\infty v_n \mu_n\right\|=0.
\]
\end{proof}

In the next theorem we show that bounded, strongly additive, and homogeneous transfunctions can be extended to vector measures of a special type, namely measures that can be defined as sums of series of positive measures multiplied by elements of a Banach space $\EE$. We will denote this space of measures by $\MS_s(X,\Sigma_X, \EE)$, that is,
 \[
\MS_s(X,\Sigma_X, \EE)=\left\{\sum_{n=1}^\infty v_n\mu_n: \mu_n\in \MS(X,\Sigma_X, \RR^+), v_n\in \EE, \sum_{n=1}^\infty \|v_n\|\|\mu_n\| < \infty \right\}. 
 \]

\begin{theorem}\label{vecext}
 Let $\Phi: \MS(X,\Sigma_X, \RR^+) \to \MS(Y,\Sigma_Y, \RR^+)$ be a bounded, strongly additive, and homogeneous transfunction and let $\EE$ be a Banach space. Then there is a unique bounded linear transfunction $\tilde{\Phi}: \MS_s(X,\Sigma_X, \EE) \to \MS(Y,\Sigma_Y, \EE)$ satisfying $\tilde{\Phi}(v\mu)=v \Phi \mu$ for every $\mu\in \MS(X,\Sigma_X, \RR^+)$ and every $v\in\EE$.
\end{theorem}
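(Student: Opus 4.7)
The plan is to define $\tilde{\Phi}$ on a given representation and then invoke Corollary \ref{C5.3} to verify that the definition depends only on the sum. Concretely, for $\mu = \sum_{n=1}^\infty v_n \mu_n \in \MS_s(X,\Sigma_X,\EE)$ I set
\[
\tilde{\Phi}(\mu) = \sum_{n=1}^\infty v_n\, \Phi(\mu_n).
\]
The series on the right converges absolutely in total variation, since
\[
\sum_{n=1}^\infty \|v_n\|\,\|\Phi(\mu_n)\| \leq \|\Phi\|\sum_{n=1}^\infty \|v_n\|\,\|\mu_n\| < \infty,
\]
using that $\Phi$ is bounded and homogeneous. Because the space of $\EE$-valued measures of bounded variation on $(Y,\Sigma_Y)$ is complete under the total variation norm, the limit is itself a vector measure of bounded variation, hence an element of $\MS(Y,\Sigma_Y,\EE)$.

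For well-definedness, suppose $\mu$ admits two representations $\sum_n v_n \mu_n = \sum_n w_n \nu_n$. Interleaving these two series with opposite signs produces a single series $\sum_n \tilde v_n \tilde \mu_n$ with $\tilde \mu_n \in \MS(X,\Sigma_X,\RR^+)$, $\sum_n \|\tilde v_n\|\|\tilde \mu_n\| < \infty$, and sum equal to $0$. Corollary \ref{C5.3} then yields $\sum_n \tilde v_n \Phi(\tilde \mu_n) = 0$, which on un-interleaving gives $\sum_n v_n \Phi(\mu_n) = \sum_n w_n \Phi(\nu_n)$. The identity $\tilde{\Phi}(v\mu) = v\Phi(\mu)$ on one-term representations is then built in by definition.

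Linearity of $\tilde{\Phi}$ over $\EE$ follows from concatenating representations, and boundedness comes from passing to the limit in Lemma \ref{bound}: for each truncation $\mu^{(N)} = \sum_{n=1}^N v_n \mu_n$ we have
\[
\|\tilde{\Phi}(\mu^{(N)})\| \leq \|\Phi\|\,\|\mu^{(N)}\|,
\]
and both sides converge by the absolute convergence just established, yielding $\|\tilde{\Phi}(\mu)\| \leq \|\Phi\|\,\|\mu\|$. For uniqueness, any bounded linear transfunction $\Psi$ satisfying $\Psi(v\mu) = v\Phi(\mu)$ must agree with $\tilde{\Phi}$ on every finite sum by linearity; the same tail estimate then forces $\Psi = \tilde{\Phi}$ on all of $\MS_s(X,\Sigma_X,\EE)$.

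The principal obstacle is well-definedness: elements of $\MS_s(X,\Sigma_X,\EE)$ have no canonical Jordan-type decomposition, and the representations $\sum v_n \mu_n$ are wildly non-unique, so it is not even obvious that the naive formula gives the same answer twice. All of that difficulty has been packaged into Corollary \ref{C5.3}, which in turn rests on Lemma \ref{bound} and ultimately on Lemma \ref{Lem1} (simultaneous finite-partition approximation of finitely many positive measures via a common Radon--Nikodym dominating measure). Granted those, the rest of the argument for Theorem \ref{vecext} reduces to a limiting argument in the total variation norm.
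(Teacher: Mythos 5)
Your proposal is correct and follows essentially the same route as the paper's proof: define $\tilde{\Phi}$ termwise on a representation, establish absolute convergence from boundedness of $\Phi$, obtain well-definedness by interleaving two representations and applying Corollary \ref{C5.3}, and derive boundedness and uniqueness by passing to the limit from the finite-sum estimate of Lemma \ref{bound}. Your added observation that completeness of the space of vector measures under the total variation norm is what guarantees the limit lies in $\MS(Y,\Sigma_Y,\EE)$ is a small but welcome explicit justification that the paper leaves implicit.
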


\begin{proof} Let $\Phi: \MS(X,\Sigma_X, \RR^+) \to \MS(Y,\Sigma_Y, \RR^+)$ be a bounded, strongly additive, and homogeneous transfunction and let $\EE$ be a Banach space.  

If $\mu = \sum_{n=1}^\infty v_n\mu_n$, for some $v_n\in \EE$ and $\mu_n\in \MS(X,\Sigma_X, \RR^+)$ such that $ \sum_{n=1}^\infty \|v_n\|\|\mu_n\| < \infty$, then we define 
\[
\tilde{\Phi}(\mu) = \sum_{n=1}^\infty v_n \Phi\mu_n. 
\] 
Since
\[
\sum_{n=1}^\infty \norm{v_n \Phi\mu_n} 
\leq \sum_{n=1}^\infty \|v_n\| \|\Phi\mu_n\|
\leq \|\Phi\| \sum_{n=1}^\infty \|v_n\| \|\mu_n\| <\infty,
\]
the series converges. Moreover, if 
$
\sum_{n=1}^\infty v_n\mu_n= \sum_{n=1}^\infty w_n\kappa_n,
$
for some $v_n, w_n\in \EE$ and $\mu_n, \kappa_n\in \MS(X,\Sigma_X, \RR^+)$ such that 
\[
\sum_{n=1}^\infty \|v_n\|\|\mu_n\| < \infty \quad \text{and} \quad \sum_{n=1}^\infty \|w_n\|\|\kappa_n\| < \infty,
\] 
then
\[
v_1\mu_1-w_1\kappa_1+v_2\mu_2-w_2\kappa_2+ \dots =0.
\]
By Corollary \ref{C5.3},
\[
v_1\Phi\mu_1-w_1\Phi\kappa_1+v_2\Phi\mu_2-w_2\Phi\kappa_2+ \dots =0,
\]
so $\sum_{n=1}^\infty v_n\Phi\mu_n= \sum_{n=1}^\infty w_n\Phi\kappa_n$. This shows that the extension is well-defined.

Clearly, $\tilde{\Phi}$ is a linear transfunction from $\MS_s(X,\Sigma_X, \EE)$ to $\MS(Y,\Sigma_Y, \EE)$. Since, by Lemma \ref{bound}, for every $n\in\NN$ we have 
\[
 \|v_1\Phi\mu_1+ \dots + v_n\Phi\mu_n\| \leq \|\Phi\| \|v_1\mu_1+ \dots + v_n\mu_n\|,
 \]
we have $\|\tilde{\Phi}(\mu)\| \leq \|\Phi\|\|\mu\|$, so $\tilde{\Phi}$ is bounded. 

Now let $\Psi: \MS_s(X,\Sigma_X, \EE) \to \MS(Y,\Sigma_Y, \EE)$ be a bounded linear transfunction satisfying $\Psi(v\mu)=v \Psi \mu$ for every $\mu \in \MS(X,\Sigma_X, \RR^+)$ and every $v\in\EE$. If $\mu = \sum_{n=1}^\infty v_n\mu_n$, for some $v_n\in \EE$ and $\mu_n\in \MS(X,\Sigma_X, \RR^+)$ such that $\ds \sum_{n=1}^\infty \|v_n\|\|\mu_n\| < \infty$, then for every $n\in\NN$ we have
\[
\Psi(v_1\mu_1+ \dots + v_n\mu_n)= \tilde{\Phi}(v_1\mu_1+ \dots + v_n\mu_n)
\] 
and consequently $\tilde{\Phi}=\Psi$ by continuity.
\end{proof}

From the above theorem and Theorem \ref{Th4.1} we obtain the following result.

\begin{corollary}\label{Cor3.8}
 Let $\EE$ be a Banach space with the Radon-Nikodym property. For every bounded, strongly additive, and homogeneous $\Phi: \MS(X,\Sigma_X, \RR^+) \to \MS(Y,\Sigma_Y, \RR^+)$ there is a unique bounded linear transfunction $\tilde{\Phi}: \MS(X,\Sigma_X, \EE) \to \MS(Y,\Sigma_Y, \EE)$ satisfying $\tilde{\Phi}(v\mu)=v \Phi \mu$ for every $\mu\in \MS(X,\Sigma_X, \RR^+)$ and every $v\in\EE$.  
\end{corollary}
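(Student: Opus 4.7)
The plan is to observe that this corollary is essentially immediate from combining Theorem \ref{vecext} with Theorem \ref{Th4.1}. The content of Theorem \ref{vecext} already provides the existence and uniqueness of a bounded linear extension $\tilde{\Phi}$ on the subspace $\MS_s(X,\Sigma_X, \EE)$ of vector measures expressible as norm-convergent series $\sum v_n \mu_n$ of scaled positive measures. All that remains is to verify that when $\EE$ has the Radon-Nikodym property, this subspace is in fact all of $\MS(X,\Sigma_X, \EE)$.

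First I would invoke Theorem \ref{Th4.1}: for every $\EE$-valued measure $\omega$ of bounded variation on $(X,\Sigma_X)$, there exist positive measures $\mu_n$ on $(X,\Sigma_X)$ and vectors $v_n\in\EE$ such that $\omega(S) = \sum_{n=1}^\infty v_n \mu_n(S)$ and $\sum_{n=1}^\infty \|v_n\| \mu_n(X) < \infty$. Since $\|\mu_n\| = \mu_n(X)$ for positive measures, this is precisely the defining condition for $\omega \in \MS_s(X,\Sigma_X, \EE)$. Hence $\MS(X,\Sigma_X, \EE) = \MS_s(X,\Sigma_X, \EE)$.

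Next I would apply Theorem \ref{vecext} directly: since $\Phi$ is bounded, strongly additive, and homogeneous, there is a unique bounded linear transfunction $\tilde{\Phi}: \MS_s(X,\Sigma_X, \EE) \to \MS(Y,\Sigma_Y, \EE)$ with $\tilde{\Phi}(v\mu) = v\Phi\mu$ for every $\mu\in \MS(X,\Sigma_X, \RR^+)$ and $v\in\EE$. Combining this with the equality of spaces from the previous step yields the desired transfunction on $\MS(X,\Sigma_X, \EE)$, and uniqueness is inherited directly from the uniqueness clause of Theorem \ref{vecext}.

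There is no real obstacle here; the only mild subtlety worth flagging is that the domain space $\MS(X,\Sigma_X, \EE)$ is being tacitly identified with vector measures of bounded variation (consistent with the convention established in Section 3), so that Theorem \ref{Th4.1} indeed applies. Once that identification is noted, the proof is a one-line chaining of the two results.
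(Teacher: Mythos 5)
Your proposal is correct and follows exactly the paper's intended argument: the paper offers no separate proof, stating only that the corollary follows from Theorem \ref{vecext} combined with Theorem \ref{Th4.1}, which is precisely the chaining you carry out (Radon--Nikodym property implies $\MS(X,\Sigma_X,\EE)=\MS_s(X,\Sigma_X,\EE)$, then apply the extension theorem). Your remark about tacitly restricting to vector measures of bounded variation is a fair and accurate reading of the paper's conventions.
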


\end{document}